\newfont{\aj}{eufm10 at12pt}
\newfont{\ajk}{eufm10 at10pt}
\theoremstyle{plain}
\newtheorem{theorem}{Theorem}[section]
\newtheorem{lemma}[theorem]{Lemma}
\newtheorem{corollary}[theorem]{Corollary}
\numberwithin{equation}{section}
\theoremstyle{definition}
\newtheorem{definition}[theorem]{Definition}
\newtheorem{example}[theorem]{Example}
\newtheorem{remark}[theorem]{Remark}
\begin{document}

\title{Parameterized IFS with the Asymptotic Average Shadowing Property}
\author [MEHDI FATEHI NIA ] {MEHDI FATEHI NIA }\\
\address{  Department of Mathematics,
Yazd University, P. O. Box 89195-741 Yazd, Iran \\ \tiny{\textrm{ e-mail:fatehiniam@yazd.ac.ir}}}
 \subjclass[2010]{ 37C50,37C15}

\keywords{Asymptotic average shadowing, chain recurrent, iterated function systems, pseudo orbit, uniformly contracting}\maketitle

\begin{abstract}In this paper we generalize the notion of asymptotic average shadowing property to parameterized \emph{IFS} and prove some related theorems on this notion. Specially, this is proved that every uniformly contracting IFS has the asymptotic average shadowing property.
As an important result,  we show that if a continuous surjective IFS $\mathcal{F}$ on a
compact metric space $X$  has the asymptotic average shadowing
property then $\mathcal{F}$ is chain transitive. Moreover, we introduce some examples and investigate  the relationship between the original asymptotic average shadowing property and asymptotic average shadowing property for IFS. For example,  there is an IFS $\mathcal{F}$  such that  has the asymptotic average shadowing property but does not satisfy the  shadowing property.
\end{abstract}

\section{Introduction}
 The shadowing property of a dynamical system  is one of the most important notions in dynamical systems(see \cite{[AH],[KP],[KS]}). The average shadowing property was further studied by several authors,
with particular emphasis on connections with other notions known from topological dynamics, or more narrowly, shadowing
theory (e.g. see \cite{[Nb],[RG1],[PZ]}). The notion of asymptotic average shadowing property was introduced by Rongbao Gu in \cite{[RA]}. Suppose that $(X,d)$ is a compact metric space and  $f:X\longrightarrow X$  a continuous map. A sequence $\{x_{i}\}_{i\geq 0}$ of points in $ X$ is called an asymptotic average-pseudo-orbit of $f$ if
 $$\lim_{n\rightarrow\infty}\frac{1}{n}\sum_{i=0}^{n-1}d(f(x_{i}),x_{i})=0.$$
 We say that $f$ has the asymptotic average shadowing property if every asymptotic average-pseudo-orbit
 $\{x_{i}\}_{i\geq 0}$ is asymptotically shadowed in average by some point $y\in X$, that is,
 $$\lim_{n\rightarrow\infty}\frac{1}{n}\sum_{i=0}^{n-1}d(f^{i}(y),x_{i})=0.$$  We use the notion of chain recurrent points to study chaotic dynamical systems. The set $CR(f)$ consisting of all chain recurrent pints, i.e., such points $x\in X$ that for any $\delta>0$, there exists a periodic $\delta-$pseudo-orbit through $x$, is called the chain recurrent set of the discrete dynamical system $(X,f)$ \cite{[AH]}.\\Let us recall that a \emph{parameterized Iterated Function System(IFS)} \\ $\mathcal{F}=\{X; f_{\lambda}|\lambda\in\Lambda\}$ is any family of continuous mappings $f_{\lambda}:X\rightarrow X,~\lambda\in \Lambda$, where $\Lambda$ is a finite nonempty set (see\cite{[GG]}).\\ Let $T=\mathbb{Z}$ or $T=\mathbb{Z}_{+}= \{n\in \mathbb{Z}:n\geq0\}$ and $\Lambda^{T}$ denote the set
of all infinite sequences $\{\lambda_{i}\}_{i\in T}$ of symbols belonging to $\Lambda$. A typical element of $\Lambda^{\mathbb{Z}_{+}}$
 can be denoted as $\sigma= \{\lambda_{0},\lambda_{1},...\}$ and we use the shorted notation $$\mathcal{F}_{\sigma_{n}}=f_{\lambda_{n-1}}o ...o f_{\lambda_{0}}.$$ A sequence $\{x_{n}\}_{n\in T}$ in $X$ is called an orbit of the IFS $\mathcal{F}$ if there exist $\sigma\in \Lambda^{T}$ such that $x_{n+1}=f_{\lambda_{n}}(x_{n})$, for each $\lambda_{n}\in \sigma$.\\Iterated function systems( IFS), are used for the construction of deterministic fractals and have found numerous applications, in
particular to image compression and image processing \cite{[B]}. Important notions in
dynamics like attractors, minimality, transitivity, and shadowing can be extended to
IFS (see \cite{[BV],[BVA],[GG],[GG1]}).  \\ A sequence $\{x_{n}\}_{n\in T}$ in $X$ is called a $\delta-$pseudo orbit of $\mathcal{F}$ if there exist $\sigma\in\Lambda^{T}$ such that for every $\lambda_{n}\in \sigma$, we have $d(f_{\lambda_{n}}(x_{n}),x_{n+1})<\delta$.\\One says that the IFS $ \mathcal{F}$ has the \emph{shadowing property }(on $T$) if, given $\epsilon>0$, there exists $\delta>0$ such that for any $\delta-$pseudo orbit $\{x_{n}\}_{n\in T}$ there exist an orbit $\{y_{n}\}_{n\in T}$, satisfying the inequality $d(x_{n},y_{n})\leq \epsilon$ for all $n\in T$. In this case one says that the $\{y_{n}\}_{n\in T}$ or  $y_{0}$, $\epsilon-$ shadows the $\delta-$pseudo orbit $\{x_{n}\}_{n\in T}$.\\The parameterized IFS
$ \mathcal{F}=\{X; f_{\lambda}|\lambda\in\Lambda\}$ is \emph{uniformly contracting} if there exists
$$\beta= sup_{\lambda\in\Lambda} sup_{x\neq y}\frac{d(f_{\lambda}(x),f_{\lambda}(y))}{d(x,y)} $$and this number called also the \emph{contracting ratio},
 is less than one\cite{[GG]}. The authors defined the shadowing property for a parameterized iterated function system and prove that if a parameterized IFS is uniformly contracting or expanding, then it has the shadowing property\cite{[GG]}. Throughout this paper, we assume that $X$ is a compact metric space, $\Lambda$ is a finite nonempty set and $T=\mathbb{Z}_{+}$.\\
 The present paper concerns the asymptotic average shadowing property for parameterized IFS and some important result about asymptotic average shadowing property are extended to iterated function systems. In this direction some proofs and example are based  on the previous work of Gu (\cite{[RA]}). Specially; Theorem \ref{tt}, Theorem \ref{tr} and Example \ref{e9} and we thank him for that. In Theorem \ref{tc} we prove that each uniformly contracting parameterized IFS has the asymptotic average shadowing property on $\mathbb{Z}_{+}$. Theorem \ref{td} shows that the asymptotic average shadowing property is preserved under topological conjugacy and Theorem \ref{tt} prove that a parameterized IFS  $\mathcal{F}$ has this property (on $\mathbb{Z}_{+}$) if and only if  so does $ \mathcal{F}^{k}$. Where $k\geq 2$ and  $\mathcal{F}^{k}$ is the $k$-fold composition of  $\mathcal{F}$. In Section 3, we prove that if $ \mathcal{F}$ has the asymptotic average shadowing property then $ \mathcal{F}$ is chain transitive,  which is one of the main results of this
paper. Finally, as an example,  we introduce a nontrivial  parameterized IFS which has the asymptotic shadowing property.
 \section{\textbf{ Asymptotic Average Shadowing Property for Iterated Function Systems}}
In this section we investigate the structure of parameterized \emph{IFS} with the asymptotic average shadowing property.
\begin{definition} A sequence $\{x_{i}\}_{i\geq 0}$ of points in $X$ is called an asymptotic average-pseudo-orbit of $ \mathcal{F}$ if there exists a natural number $N = N(\delta) > 0$ and  $\sigma=\{\lambda_{0},\lambda_{1},\lambda_{2},...\}$ in  $\Lambda^{\mathbb{Z}_{+}}$,
such that
$$\lim_{n\rightarrow\infty}\frac{1}{n}\sum_{i=0}^{n-1}d(f_{\lambda_{i}}(x_{i}),x_{i+1})=0.$$
A sequence $\{x_{i}\}_{i\geq 0}$ in $X$ is said to be asymptotically shadowed in average by a point $z$ in $X$ if there exist $\sigma\in\Lambda^{\mathbb{Z}_{+}}$ such that
$$\lim_{n\rightarrow\infty}\frac{1}{n}\sum_{i=0}^{n-1}d(\mathcal{F}_{\sigma_{i}}(z),x_{i})=0.$$ \end{definition}
A continuous IFS $\mathcal{F}$ is said to have the  asymptotic average shadowing property if  every asymptotic average-pseudo-orbit of $\mathcal{F}$ is  asymptotically shadowed in average by some point in $X$.
Please note that if $\Lambda$ is a set with one member then the parameterized IFS $ \mathcal{F}$ is an ordinary discrete dynamical system.\\In this case the asymptotic average shadowing property for $\mathcal{F}$ is ordinary asymptotic average shadowing property for a discrete dynamical system.
\begin{example}
Let $X=\{a_{1},...,a_{n}\}$ be a finite set with the discrete metric $d$. Suppose $\{f_{\lambda}\}_{\lambda\in\Lambda}$ is the family of all surjective functions on $X$. For each arbitrary sequence $\{x_{i}\}_{i\geq 0}$ there exists $z\in X$ and   $\sigma\in \Lambda^{\mathbb{Z}_{+}}$ such that $\mathcal{F}_{\sigma_{i}}(z)= x_{i}$ for all $i\geq 0$. Then  $ \mathcal{F}=\{X; f_{\lambda}|\lambda\in\Lambda\}$ has the asymptotic average shadowing property.
\end{example}
In \cite{[GG]}, the authors  have proved that every uniformly contracting \emph{IFS} has the shadowing property. In the following theorem we prove that every uniformly contracting \emph{IFS} has the asymptotic average shadowing property.
\begin{theorem}\label{tc}
If a parameterized IFS $\mathcal{F}=\{X; f_{\lambda}|\lambda\in\Lambda\}$ is uniformly contracting, then it has the asymptotic average shadowing property.
\end{theorem}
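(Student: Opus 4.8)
The plan is to shadow the pseudo-orbit by the \emph{orbit through its own initial point}, using the same parameter sequence, and to exploit the fact that uniform contraction makes the accumulated error decay geometrically, so a Cesàro-null error sequence forces a Cesàro-null deviation. First I would fix the contracting ratio $\beta=\sup_{\lambda\in\Lambda}\sup_{x\neq y} d(f_{\lambda}(x),f_{\lambda}(y))/d(x,y)<1$, so that $d(f_{\lambda}(u),f_{\lambda}(v))\le\beta\, d(u,v)$ for all $\lambda\in\Lambda$ and $u,v\in X$. Let $\{x_{i}\}_{i\ge0}$ be an asymptotic average-pseudo-orbit with witnessing sequence $\sigma=\{\lambda_{0},\lambda_{1},\dots\}\in\Lambda^{\mathbb{Z}_{+}}$, and set $a_{i}=d(f_{\lambda_{i}}(x_{i}),x_{i+1})$, so that $\frac1n\sum_{i=0}^{n-1}a_{i}\to0$ by hypothesis.

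Next I would take $z=x_{0}$ and $y_{i}=\mathcal{F}_{\sigma_{i}}(z)$, so that $y_{0}=x_{0}$ and $y_{i+1}=f_{\lambda_{i}}(y_{i})$, and control $e_{i}:=d(y_{i},x_{i})$. The triangle inequality together with the contraction gives
$$e_{i+1}=d(f_{\lambda_{i}}(y_{i}),x_{i+1})\le d(f_{\lambda_{i}}(y_{i}),f_{\lambda_{i}}(x_{i}))+d(f_{\lambda_{i}}(x_{i}),x_{i+1})\le\beta e_{i}+a_{i},$$
and since $e_{0}=0$ an immediate induction yields $e_{i}\le\sum_{j=0}^{i-1}\beta^{\,i-1-j}a_{j}$.

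Finally I would sum over $i$ and interchange the order of summation:
$$\sum_{i=0}^{n-1}e_{i}\le\sum_{j=0}^{n-2}a_{j}\sum_{k=0}^{\,n-2-j}\beta^{k}\le\frac{1}{1-\beta}\sum_{j=0}^{n-1}a_{j},$$
whence $\frac1n\sum_{i=0}^{n-1}d(\mathcal{F}_{\sigma_{i}}(z),x_{i})=\frac1n\sum_{i=0}^{n-1}e_{i}\le\frac{1}{1-\beta}\cdot\frac1n\sum_{j=0}^{n-1}a_{j}\to0$. Thus $z=x_{0}$ asymptotically shadows $\{x_{i}\}$ in average (with the same $\sigma$), which is precisely the asymptotic average shadowing property. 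There is no genuine obstacle here; the only step needing a little care is the summation interchange, which is really the observation that convolving a Cesàro-null sequence with the summable geometric kernel $(\beta^{k})_{k\ge0}$ keeps it Cesàro-null — everything else is the standard contraction estimate.
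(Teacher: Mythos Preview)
Your proof is correct and follows essentially the same approach as the paper: shadow along the same parameter sequence $\sigma$, obtain the recursive estimate $e_{i+1}\le\beta e_{i}+a_{i}$, unfold it into a geometric convolution, and sum. The only cosmetic difference is that the paper allows an arbitrary starting point $y_{0}\in X$ (carrying an extra $\beta^{i}M$ term with $M=d(x_{0},y_{0})$ that also vanishes in the Ces\`aro mean), whereas you take $z=x_{0}$ so that $e_{0}=0$, which slightly streamlines the bookkeeping.
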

\begin{proof}
Assume that $\beta<1$ is the contracting ratio of $\mathcal{F}$ and suppose $\{x_{i}\}_{i\geq 0}$ is an asymptotic average pseudo orbit for $ \mathcal{F}$. So there exist\\ $\sigma=\{\lambda_{0},\lambda_{1},\lambda_{2},...\}\in\Lambda^{\mathbb{Z}_{+}}$ such that $\lim_{n\rightarrow\infty}\frac{1}{n}\sum_{i=0}^{n-1}d(f_{\lambda_{i}}(x_{i}),x_{i+1})=0$. Put $\alpha_{i}=d(f_{\lambda_{i}}(x_{i}),x_{i+1})$, for all $i\geq 0$. Consider an orbit $\{y_{i}\}_{i\geq 0}$ such that $y_{0}\in X$ and $y_{i+1}=f_{\lambda_{i}}(y_{i})$, for all $i\geq 0$.\\Now we show that $\lim_{n\rightarrow\infty}\frac{1}{n}\sum_{i=0}^{n-1}d(y_{i},x_{i})=0$.\\
Take $M=d(x_{0},y_{0})$. Obviously, $$d(x_{1},y_{1})\leq d(x_{1},f_{\lambda_{0}}(x_{0}))+d(f_{\lambda_{0}}(x_{0}),f_{\lambda_{0}}(y_{0}))\leq \alpha_{0}+\beta M.$$
Similarly \begin{eqnarray*}d(x_{2},y_{2})&\leq& d(x_{2},f_{\lambda_{1}}(x_{1}))+d(f_{\lambda_{1}}(x_{1}),f_{\lambda_{1}}(y_{1}))\\&\leq& \alpha_{1}+\beta d(x_{1},y_{1})\\&\leq& \alpha_{1}+\beta(\alpha_{0}+\beta M).\end{eqnarray*} And
\begin{eqnarray*}d(x_{3},y_{3})&\leq& d(x_{3},f_{\lambda_{2}}(x_{2}))+d(f_{\lambda_{2}}(x_{2}),f_{\lambda_{2}}(y_{2}))\\ &\leq& \alpha_{2}+\beta d(x_{2},y_{2})\\ &\leq& \alpha_{2}+\beta(\alpha_{1}+\beta d(x_{1},y_{1}))\\ &\leq& \alpha_{2}+\beta(\alpha_{1}+\beta (\alpha_{0}+\beta M)\\&=&\alpha_{2}+\beta\alpha_{1}+\beta^{2} \alpha_{0}+\beta^{3} M.\end{eqnarray*}
By induction, one can prove that for each $i>2$
$$d(x_{i},y_{i})\leq \alpha_{i-1}+\beta\alpha_{i-2}+...+\beta^{i-1} \alpha_{0}+\beta^{i} M.$$
This implies that \begin{eqnarray*}\sum_{i=0}^{n-1}d(y_{i},x_{i})
&\leq &M(1+\beta+..+\beta^{n-1})\\& +&\alpha_{0}(1+\beta+..+\beta^{n-2})\\ &+&\alpha_{1}(1+\beta+..+\beta^{n-3})\\& \vdots&\\&+&\alpha_{n-2}\\&\leq&
\frac{1}{1-\beta}(M+\sum_{i=0}^{n-2}\alpha_{i}).\end{eqnarray*}
Therefore \begin{eqnarray*}\lim_{n\rightarrow\infty}\frac{1}{n}\sum_{i=0}^{n-1}d(y_{i},x_{i})&\leq&\lim_{n\rightarrow\infty}\frac{1}{n}( \frac{1}{1-\beta}(M+\sum_{i=0}^{n-2}\alpha_{i}))=0.\end{eqnarray*}
So the proof is complete.
\end{proof}
Let us recall some notions related to symbolic dynamics. \\Let $\Sigma_{2}\{(s_{0}s_{1}s_{2}...)|s_{i}=0 or ~1\}$. We will
refer to elements of $\Sigma_{2}$ as points in $\Sigma_{2}$. Let $s=s_{0}s_{1}s_{2}...$ and $t=t_{0}t_{1}t_{2}...$ be points  in $\Sigma_{2}$. We denote the distance between $s$ and $t$ as $d(s,t)$ and define it by \\$d(s,t) =\left\lbrace
 \begin{array}{c l}
  0,& \text{ $s=t$}\\
  \frac{1}{2^{k-1}},& \text{$k=min\{i; s_{i}\neq t_{i}\}$}\end{array}
\right.$
\begin{example}\label{e2}
Let $f_{0},~f_{1}:\Sigma_{2}\rightarrow\Sigma_{2}$ are two map defined as \\$f_{0}(s_{0}s_{1}s_{2}...)=0s_{0}s_{1}s_{2}...$ and $f_{1}(s_{0}s_{1}s_{2}...)=1s_{0}s_{1}s_{2}...$ for each $s=s_{0}s_{1}s_{2}...\in\Sigma_{2}$.
\\ This is clear that $\mathcal{F}=\{\Sigma_{2}; f_{0}, f_{1}\} $ is an uniformly contracting IFS and by Theorem \ref{tc} has the asymptotic average shadowing property.
\end{example}
  \begin{definition}
  Suppose $(X,d)$ and $(Y,d^{'})$ are compact metric spaces and $\Lambda$ is a finite set. Let $ \mathcal{F}=\{X; f_{\lambda}|\lambda\in\Lambda\}$ and $ \mathcal{G}=\{Y; g_{\lambda}|\lambda\in\Lambda\}$ are two $IFS$ which $f_{\lambda}:X\rightarrow X$ and  $g_{\lambda}:Y\rightarrow Y$ are continuous maps for all $\lambda\in\Lambda$. We say that $ \mathcal{F}$ is topologically conjugate to $ \mathcal{G}$  if there is a homeomorphism  $h:X\rightarrow Y$  such that $g_{\lambda}=h o f_{\lambda} o h^{-1}$, for all $\lambda\in\Lambda$. In this case, $h$ is called a topological conjugacy. \end{definition}
 It is well known that if $f:X\rightarrow X$ and $g:Y\rightarrow Y$ are conjugated then $f$ has the shadowing property if and only if so does $g$ \cite{[AH]}. In the next theorem we show that asymptotic average shadowing property is invariant under topological conjugacy  for \emph{iterated function systems }.
\begin{theorem}\label{td}
Suppose $(X,d)$ and $(Y,d^{'})$ are compact metric spaces and $\Lambda$ is a finite set. Let $ \mathcal{F}=\{X; f_{\lambda}|\lambda\in\Lambda\}$ and $ \mathcal{G}=\{Y; g_{\lambda}|\lambda\in\Lambda\}$ are two conjugated IFS with topological conjugacy $h$. Then $\mathcal{F}$ has the asymptotic average shadowing property if and only if so does $\mathcal{G}$.
\end{theorem}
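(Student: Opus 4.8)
The plan is to exploit that a homeomorphism $h$ between compact metric spaces is uniformly continuous in both directions, and to combine this with a simple density argument that upgrades a pointwise modulus of continuity to a statement about Cesàro averages. The one tool I would isolate first is the following elementary observation: \emph{if $\phi:(X,d)\to(Y,d')$ is uniformly continuous and $\{a_i\}_{i\ge0},\{b_i\}_{i\ge0}$ are sequences in $X$ with $\frac1n\sum_{i=0}^{n-1}d(a_i,b_i)\to0$, then $\frac1n\sum_{i=0}^{n-1}d'(\phi(a_i),\phi(b_i))\to0$.} To prove it, fix $\epsilon>0$, pick $\delta>0$ with $d(a,b)<\delta\Rightarrow d'(\phi(a),\phi(b))<\epsilon$, and split $\{0,\dots,n-1\}$ into $G_n=\{i:d(a_i,b_i)<\delta\}$ and its complement $B_n$. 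Since $\delta\,|B_n|\le\sum_{i=0}^{n-1}d(a_i,b_i)$ we get $|B_n|/n\to0$, and bounding each bad term by $\mathrm{diam}(Y)$ gives $\frac1n\sum_{i=0}^{n-1}d'(\phi(a_i),\phi(b_i))\le\epsilon+\frac{|B_n|}{n}\mathrm{diam}(Y)$, so the $\limsup$ is at most $\epsilon$; letting $\epsilon\to0$ finishes the claim. Compactness of $X$ and $Y$ enters exactly here (finite diameters) and in ensuring $h,h^{-1}$ are uniformly continuous.

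Granting this, it suffices to prove one implication, say that the asymptotic average shadowing property of $\mathcal F$ forces that of $\mathcal G$; the converse follows by replacing $h$ with $h^{-1}$. Let $\{y_i\}_{i\ge0}$ be an asymptotic average-pseudo-orbit of $\mathcal G$ with index sequence $\sigma=\{\lambda_i\}\in\Lambda^{\mathbb Z_+}$, so $\frac1n\sum_{i=0}^{n-1}d'(g_{\lambda_i}(y_i),y_{i+1})\to0$. Set $x_i=h^{-1}(y_i)$. The conjugacy relation $g_\lambda=h\circ f_\lambda\circ h^{-1}$ rearranges to $f_\lambda\circ h^{-1}=h^{-1}\circ g_\lambda$, hence $d(f_{\lambda_i}(x_i),x_{i+1})=d\bigl(h^{-1}(g_{\lambda_i}(y_i)),h^{-1}(y_{i+1})\bigr)$; applying the observation with $\phi=h^{-1}$ shows that $\{x_i\}_{i\ge0}$ is an asymptotic average-pseudo-orbit of $\mathcal F$ for the same $\sigma$.

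By the asymptotic average shadowing property of $\mathcal F$ there are $z\in X$ and $\sigma'=\{\mu_i\}\in\Lambda^{\mathbb Z_+}$ with $\frac1n\sum_{i=0}^{n-1}d(\mathcal F_{\sigma'_i}(z),x_i)\to0$. An induction on the conjugacy relation (the internal factors $h^{-1}\circ h$ cancel) gives $\mathcal G_{\sigma'_i}=h\circ\mathcal F_{\sigma'_i}\circ h^{-1}$ for every $i$, so $\mathcal G_{\sigma'_i}(h(z))=h(\mathcal F_{\sigma'_i}(z))$. Therefore $d'(\mathcal G_{\sigma'_i}(h(z)),y_i)=d'\bigl(h(\mathcal F_{\sigma'_i}(z)),h(x_i)\bigr)$, and the observation with $\phi=h$ yields $\frac1n\sum_{i=0}^{n-1}d'(\mathcal G_{\sigma'_i}(h(z)),y_i)\to0$. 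Thus $w=h(z)$ asymptotically shadows $\{y_i\}_{i\ge0}$ in average, so $\mathcal G$ has the asymptotic average shadowing property; exchanging the roles of $\mathcal F$ and $\mathcal G$ (and of $h$ and $h^{-1}$) gives the reverse implication.

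The only genuine obstacle is the passage from the uniform modulus of continuity to control of the averaged sums, which is precisely the observation in the first paragraph; once that is in hand, the rest is purely formal manipulation of the conjugacy identity $g_\lambda=h\circ f_\lambda\circ h^{-1}$ and its consequence $\mathcal G_{\sigma_i}=h\circ\mathcal F_{\sigma_i}\circ h^{-1}$.
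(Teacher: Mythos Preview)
Your proof is correct and follows essentially the same route as the paper: transfer the pseudo-orbit through $h^{-1}$, shadow in $\mathcal F$, and transfer the shadowing orbit back through $h$. The one difference is that you actually justify the crucial step---that uniform continuity of $h,h^{-1}$ carries Ces\`aro convergence to zero through the map---via the good/bad index density argument, whereas the paper simply asserts this with the phrase ``since $h$ is a homeomorphism.''
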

\begin{proof}
 Suppose  $\{y_{i}\}_{i\geq 0}$ is an asymptotic average pseudo orbit of $\mathcal{G}$. This means that there exist $\sigma=\{\lambda_{0},\lambda_{1},\lambda_{2},...\}$ in  $\Lambda^{\mathbb{Z}_{+}}$,
such that, $$\lim_{n\rightarrow\infty}\frac{1}{n}\sum_{i=0}^{n-1}d^{'}(g_{\lambda_{i}}(y_{i}),y_{i+1})=0.$$ Put $x_{i}=h^{-1}(y_{i})$, for all $i\geq 0$. Then $$d(f_{\lambda_{i}}(x_{i}),x_{i+1})=d(h^{-1} o g_{\lambda_{i}}(y_{i}),h^{-1}(y_{i+1})),$$ for all $i\geq 0$. Thus, since $h$ is a homeomorphism, we have $$\lim_{n\rightarrow\infty}\frac{1}{n}\sum_{i=0}^{n-1}d(f_{\lambda_{i}}(x_{i}),x_{i+1}))=0.$$
 Hence $\{x_{i}\}_{i\geq 0}$ is an asymptotic average pseudo orbit for $\mathcal{F}$ and so there is an orbit $\{z_{i}\}_{i\geq 0}$ of  $\mathcal{F}$ such that $$\lim_{n\rightarrow\infty}\frac{1}{n}\sum_{i=0}^{n-1}d(z_{i},x_{i})=0.$$ Note that for each $i\geq 0$ there is $\mu_{i}\in \Lambda$ such that $z_{i+1}=f_{\mu_{i}}(z_{i})$. Let $w_{i}=h(z_{i})$, this is clear that \\ $w_{i+1}=h(z_{i+1})=h(f_{\mu_{i}}(z_{i}))=g_{\mu_{i}}(h(z_{i}))=g_{\mu_{i}}(w_{i})$ for all $i\geq 0$.\\ Therefor $\{w_{i}\}_{i\geq 0}$ is an orbit of $\mathcal{G}$ and $d^{'}(w_{i},y_{i})=d^{'}(h(z_{i}),h(x_{i})),$ for all $i\geq 0$.  Then $$\lim_{n\rightarrow\infty}\frac{1}{n}\sum_{i=0}^{n-1}d^{'}(w_{i},y_{i})=
 \lim_{n\rightarrow\infty}\frac{1}{n}\sum_{i=0}^{n-1}d^{'}(h(z_{i}),h(x_{i}))=0.$$
\end{proof}
Gu \cite{[RA]} showed that, for a dynamical system $(X,f)$, If $f$ has the average-shadowing property, then so does $f^{k}$ for every $k\in \mathbb{N}$. The following theorem generalize a similar result for IFS.\\
We use the following lemma to prove Theorem \ref{tt}
\begin{lemma}\cite{[W]}\label{l1}
  If $\{a_{i}\}_{i=0}^{\infty}$ is a bounded sequence of non-negative real numbers, then the following are equivalent:\\$i-\lim_{n\rightarrow\infty}\frac{1}{n}\sum_{i=0}^{n-1}a_{i}=0.$\\
  $ii-$ There is a subset $J$ of $\mathbb{Z}^{+}$ of density zero\\
  $(i.e., \lim_{n\rightarrow\infty}\frac{Card(J\cap \{0,1,...,N-=1\})}{n}=0)$,\\such that $\lim_{j\rightarrow\infty} a_{j}=0$ provided $j\notin J$, where $\mathbb{Z}^{+}$ is the set of all non-negative integers.
  \end{lemma}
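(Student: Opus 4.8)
The plan is to establish the two implications of the equivalence separately; $(ii)\Rightarrow(i)$ is a direct splitting-of-sums estimate, while $(i)\Rightarrow(ii)$ needs a small diagonal construction of the exceptional set $J$.

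First I would prove $(ii)\Rightarrow(i)$. Fix $M>0$ with $0\le a_i\le M$ for all $i$, and fix $\epsilon>0$. Since $a_j\to 0$ as $j\to\infty$ along the complement of $J$, choose $N_0$ so that $a_j<\epsilon$ whenever $j\ge N_0$ and $j\notin J$. Split the average $\frac{1}{n}\sum_{i=0}^{n-1}a_i$ into three blocks: the indices $i<N_0$, the indices $N_0\le i<n$ with $i\in J$, and the indices $N_0\le i<n$ with $i\notin J$. The first block is bounded by $MN_0/n\to 0$; the second is bounded by $\frac{M}{n}\,\mathrm{Card}(J\cap\{0,1,\dots,n-1\})\to 0$ because $J$ has density zero; the third is bounded by $\epsilon$. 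Hence $\limsup_{n\to\infty}\frac{1}{n}\sum_{i=0}^{n-1}a_i\le\epsilon$, and letting $\epsilon\downarrow 0$ yields $(i)$.

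Next I would prove $(i)\Rightarrow(ii)$. For each integer $k\ge 1$ set $J_k=\{\,i\in\mathbb{Z}^{+}:a_i\ge 1/k\,\}$. From $\frac{1}{k}\,\mathrm{Card}(J_k\cap\{0,1,\dots,n-1\})\le\sum_{i=0}^{n-1}a_i$ we get $\frac{1}{n}\,\mathrm{Card}(J_k\cap\{0,1,\dots,n-1\})\le\frac{k}{n}\sum_{i=0}^{n-1}a_i\to 0$, so each $J_k$ has density zero, and plainly $J_1\subseteq J_2\subseteq\cdots$. Choose integers $N_1<N_2<\cdots$ with $\frac{1}{n}\,\mathrm{Card}(J_k\cap\{0,1,\dots,n-1\})<1/k$ for all $n\ge N_k$, and put $J=\bigcup_{k\ge 1}\bigl(J_k\cap[N_k,N_{k+1})\bigr)$. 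Then two verifications remain. (a) $J$ has density zero: if $n\in[N_m,N_{m+1})$ then, using $J_k\subseteq J_m$ for $k\le m$, one checks $J\cap[0,n)\subseteq J_m\cap[0,n)$, so $\frac{1}{n}\,\mathrm{Card}(J\cap[0,n))<1/m$, which tends to $0$ as $n\to\infty$. (b) $a_j\to 0$ along $\mathbb{Z}^{+}\setminus J$: if $j\notin J$, $j\ge N_k$, and $j\in[N_m,N_{m+1})$, then $m\ge k$ and $j\notin J_m$, so $a_j<1/m\le 1/k$.

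The only genuine obstacle is the construction in $(i)\Rightarrow(ii)$: the naive choice $J=\bigcup_k J_k$ need not have density zero even though each $J_k$ does, so the telescoping truncation $J_k\cap[N_k,N_{k+1})$ is what matters — it keeps the density of $J$ under control while still ensuring that every threshold $1/k$ is eventually respected outside $J$. The boundedness hypothesis on $\{a_i\}$ is used only in $(ii)\Rightarrow(i)$, to bound the contribution of the density-zero index set $J$ to the average.
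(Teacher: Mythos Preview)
Your proof is correct and is essentially the standard argument (as in Walters, \emph{An Introduction to Ergodic Theory}, Theorem~1.20). Note that the paper does not actually prove this lemma: it is quoted from \cite{[W]} as a known result and used as a tool in Theorems~\ref{tt} and~\ref{tr} and Example~\ref{e9}. So there is no ``paper's own proof'' to compare against; you have supplied what the paper merely cites.
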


\begin{theorem}\label{tt}
Let $\Lambda$ be a finite set, $ \mathcal{F}=\{X; f_{\lambda}|\lambda\in\Lambda\}$ is an $IFS$ and let $k\geq 2$ be an integer. Set $$\mathcal{F}^{k}= \{X; g_{\mu}|\mu\in\Pi\}=\{X; f_{\lambda_{k-1}}o ...of_{\lambda_{0}}|\lambda_{0},...,\lambda_{k-1}\in\Lambda\}.$$
 $\mathcal{F}$ has the asymptotic average shadowing property  if and only if  so does $ \mathcal{F}^{k}$.
\end{theorem}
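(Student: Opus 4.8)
The plan is to use the evident dictionary between a block $(\lambda_{kj},\ldots ,\lambda_{kj+k-1})$ of $k$ consecutive symbols of $\Lambda$ and a single symbol of $\Pi$: concatenating symbol sequences identifies every orbit (and every pseudo orbit) of $\mathcal F^{k}$ with every $k$-th point of an orbit (resp.\ pseudo orbit) of $\mathcal F$. Two elementary facts will be used over and over. (a) By Lemma~\ref{l1} a bounded sequence $\{a_{j}\}$ of non-negative reals satisfies $\frac1N\sum_{j<N}a_{j}\to0$ iff $a_{j}\to0$ off a set of density zero; since $\operatorname{Card}\{\,i<n:ki\in J\,\}\le\operatorname{Card}\bigl(J\cap\{0,\ldots ,kn-1\}\bigr)$, the set $\{\,i:ki\in J\,\}$ is density zero whenever $J$ is, so one may move freely between an averaged estimate over all indices $j$ and the corresponding one over the progression $j\in\{0,k,2k,\ldots\}$. (b) Since $\Lambda$ is finite and $X$ compact, a single modulus of continuity $\omega$ (non-decreasing, $\omega(t)\to0$ as $t\to0^{+}$, and with $\omega(t)\ge t$) serves every composition of at most $k$ of the $f_{\lambda}$; together with (a) this shows $\frac1N\sum_{j<N}\omega(a_{j})\to0$ whenever $\frac1N\sum_{j<N}a_{j}\to0$. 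Finally, writing $N=kn+r$ with $0\le r<k$, the leftover terms contribute at most $k\operatorname{diam}(X)/N\to0$, so only $N$ divisible by $k$ need be treated.

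($\Rightarrow$) Assume $\mathcal F$ has the asymptotic average shadowing property and let $\{x_{i}\}_{i\ge0}$ be an asymptotic average pseudo orbit of $\mathcal F^{k}$ with symbols $\mu_{i}=(\mu_{i}^{0},\ldots ,\mu_{i}^{k-1})\in\Pi$. I would unfold it to the $\mathcal F$-sequence $z_{ki}=x_{i}$, $z_{ki+\ell+1}=f_{\mu_{i}^{\ell}}(z_{ki+\ell})$ for $0\le\ell\le k-2$, with symbol sequence $\sigma$ the concatenation of the $\mu_{i}$. Its one-step error vanishes inside every block and equals $d(g_{\mu_{i}}(x_{i}),x_{i+1})$ at the end of block $i$, so $\frac1{kn}\sum_{j<kn}d(f_{\sigma_{j}}(z_{j}),z_{j+1})=\frac1{kn}\sum_{i<n}d(g_{\mu_{i}}(x_{i}),x_{i+1})\to0$, and $\{z_{j}\}$ is an asymptotic average pseudo orbit of $\mathcal F$. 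Choose $w\in X$ and $\theta\in\Lambda^{\mathbb{Z}_{+}}$ that asymptotically shadow $\{z_{j}\}$ in average, re-block $\theta$ into $\nu_{i}=(\theta_{ki},\ldots ,\theta_{ki+k-1})\in\Pi$, and put $\tau=\{\nu_{0},\nu_{1},\ldots\}$. Since $g_{\nu_{i-1}}\circ\cdots\circ g_{\nu_{0}}=f_{\theta_{ki-1}}\circ\cdots\circ f_{\theta_{0}}$, one has $\mathcal F^{k}_{\tau_{i}}(w)=\mathcal F_{\theta_{ki}}(w)$; and because $x_{i}=z_{ki}$, fact (a) converts $\frac1N\sum_{j<N}d(\mathcal F_{\theta_{j}}(w),z_{j})\to0$ into $\frac1n\sum_{i<n}d(\mathcal F^{k}_{\tau_{i}}(w),x_{i})\to0$, i.e.\ $w$ asymptotically shadows $\{x_{i}\}$ in average for $\mathcal F^{k}$.

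($\Leftarrow$) Assume $\mathcal F^{k}$ has the asymptotic average shadowing property and let $\{x_{j}\}_{j\ge0}$ be an asymptotic average pseudo orbit of $\mathcal F$ with symbols $\lambda_{j}$; set $\alpha_{j}=d(f_{\lambda_{j}}(x_{j}),x_{j+1})$ and $\mu_{i}=(\lambda_{ki},\ldots ,\lambda_{ki+k-1})\in\Pi$. A telescoping estimate along block $i$ gives $d(g_{\mu_{i}}(x_{ki}),x_{k(i+1)})\le\sum_{\ell=0}^{k-1}\omega(\alpha_{ki+\ell})$, and summing over $i$ and using fact (b) shows that $\{x_{ki}\}_{i}$, with symbols $\{\mu_{i}\}$, is an asymptotic average pseudo orbit of $\mathcal F^{k}$. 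Choose $z\in X$ and $\tau=\{\nu_{0},\nu_{1},\ldots\}$ with $\nu_{i}=(\nu_{i}^{0},\ldots ,\nu_{i}^{k-1})$ that asymptotically shadow it in average, let $\sigma$ concatenate the $\nu_{i}$, and let $\{w_{j}\}$ be the $\mathcal F$-orbit of $z$ along $\sigma$; then $w_{ki}=\mathcal F^{k}_{\tau_{i}}(z)$, so $\frac1n\sum_{i<n}d(w_{ki},x_{ki})\to0$. It then remains to estimate the intra-block distances $d(w_{ki+\ell},x_{ki+\ell})$ for $0<\ell<k$ and to deduce $\frac1N\sum_{j<N}d(w_{j},x_{j})\to0$.

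This last step is where I expect the genuine difficulty. One has $w_{ki+\ell}=f_{\nu_{i}^{\ell-1}}\circ\cdots\circ f_{\nu_{i}^{0}}(w_{ki})$, which by uniform continuity and $w_{ki}\approx x_{ki}$ (in average) lies near $f_{\nu_{i}^{\ell-1}}\circ\cdots\circ f_{\nu_{i}^{0}}(x_{ki})$, whereas on the density-one set of blocks for which $\alpha_{ki},\ldots ,\alpha_{ki+k-1}$ are small one has $x_{ki+\ell}\approx f_{\lambda_{ki+\ell-1}}\circ\cdots\circ f_{\lambda_{ki}}(x_{ki})$ --- and the strings $\nu_{i}$ and $\mu_{i}$ are a priori unrelated, so this naive unfolding need not track $\{x_{j}\}$ inside blocks. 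To get around this I would first replace $\{x_{j}\}$, up to an error with zero limiting average (which is legitimate by the $\omega$-estimate above and Lemma~\ref{l1}), by the sequence that inside each good block follows exactly the $\mathcal F$-orbit issued from that block's starting point, thereby reducing to pseudo orbits whose block-start subsample determines them inside good blocks; the crux is then to arrange that the $\mathcal F^{k}$-shadowing of that subsample can be taken with a symbol sequence that agrees, in density, with $\{\mu_{i}\}$, after which uniform continuity and a final appeal to Lemma~\ref{l1} complete the argument, the density-zero set of bad blocks being absorbed by the diameter bound.
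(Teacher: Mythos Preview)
Your forward direction ($\Rightarrow$) is essentially the paper's proof: unfold the $\mathcal F^{k}$-pseudo-orbit into an $\mathcal F$-pseudo-orbit, shadow in $\mathcal F$, then subsample along multiples of $k$ (the paper uses a direct inequality rather than Lemma~\ref{l1}, but the content is the same).

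For the reverse direction ($\Leftarrow$) you follow the paper's strategy --- sample $\{x_{ki}\}$, verify it is an asymptotic average pseudo-orbit of $\mathcal F^{k}$ via Lemma~\ref{l1} and uniform continuity, shadow it by some $u$ along symbols $\nu_{i}\in\Pi$, then unfold --- and you correctly isolate the real obstruction: the shadowing symbols $\nu_{i}$ need not agree with the original blocks $\mu_{i}=(\lambda_{ki},\ldots,\lambda_{ki+k-1})$, so the unfolded orbit point $w_{ki+\ell}$, obtained by applying the $\nu_{i}$-symbols to $w_{ki}\approx x_{ki}$, has no a priori reason to be near $x_{ki+\ell}$, which on good blocks is obtained by applying the $\lambda$-symbols to $x_{ki}$. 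The paper does not confront this: from $d(\mathcal F^{k}_{\gamma_{j}}(u),x_{jk})<\delta$ it simply asserts $d(\mathcal F_{\sigma_{jk+\ell}}(u),x_{jk+\ell})<\epsilon$ for $0\le\ell<k$, citing only the uniform-continuity estimate~(\ref{cb}); but (\ref{cb}) compares the \emph{same} composition applied to two nearby points, not two \emph{different} compositions applied to the same point. So the difficulty you flagged is genuine, and the paper's own argument contains the same gap (in Gu's single-map setting the issue is invisible because $|\Lambda|=1$ forces $\nu_{i}=\mu_{i}$; it is precisely the IFS generalization that creates the problem).

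Your proposed repair --- arranging that the $\mathcal F^{k}$-shadowing symbol sequence agree with $\{\mu_{i}\}$ off a set of density zero --- is not supplied by the definition of the asymptotic average shadowing property, which only guarantees \emph{some} shadowing orbit with no control on its symbols. As written, then, this step remains a gap both in your proposal and in the paper; closing it would require either a stronger shadowing hypothesis or a genuinely different argument for the intra-block estimate.
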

\begin{proof} Suppose  that $\mathcal{F}$ has the asymptotic average shadowing property and $\{x_{i}\}_{i\geq 0}$ is an asymptotic average pseudo orbit for $\mathcal{F}^{k}$. So there exist\\ $\gamma=\{\mu_{0}, \mu_{1},\mu_{2},\mu_{3},...\}$ in  $\Pi^{\mathbb{Z}_{+}}$,
such that
$$\lim_{n\rightarrow\infty}\frac{1}{n}\sum_{i=0}^{n-1}d(g_{\mu_{i}}(x_{i}),x_{i+1})=0.$$ Now we put $y_{nk+j}=f_{\lambda^{n}_{j-1}}o ...of_{\lambda^{n}_{0}}(x_{n})$ and
$y_{nk}=x_{n}$,
for $0<j<k$ and $n\in\mathbb{Z}_{+}$. Which  $g_{\mu_{n}}=f_{\lambda^{n}_{k-1}}o ...of_{\lambda^{n}_{0}}$,
 that is $$\{y_{i}\}_{i\geq0}=\{x_{0},f_{\lambda^{0}_{0}}(x_{0}),...,f_{\lambda^{0}_{k-2}}o ...of_{\lambda^{0}_{0}}(x_{0}),x_{1},f_{\lambda^{1}_{0}}(x_{1}),...,f_{\lambda^{1}_{k-2}}o ...of_{\lambda^{1}_{0}}(x_{1}),....\}.$$
 This is clear that $\{y_{i}\}_{i\geq0}$ is an asymptotic average pseudo-orbit for $\mathcal{F}$. So there exists  $x\in X$ and $\sigma\in \Lambda^{\mathbb{Z}_{+}}$  satisfies, $$\lim_{n\rightarrow\infty}\frac{1}{n}\sum_{i=0}^{n-1}d(\mathcal{F}_{\sigma_{i}}(x),y_{i})=0.$$ This implies that $$
 \lim_{n\rightarrow\infty}\frac{1}{kn}\sum_{i=0}^{n-1}d(\mathcal{F}_{\sigma_{ki}}(x),y_{ki})\leq
 \lim_{n\rightarrow\infty}\frac{1}{kn}\sum_{j=0}^{kn-1}d(\mathcal{F}_{\sigma_{j}}(x),y_{j})=0.$$
 So $$\lim_{n\rightarrow\infty}\frac{1}{n}\sum_{i=0}^{n-1}d(\mathcal{F}_{\sigma_{ki}}(x),y_{ki})=0.$$
 Since $y_{ik}=x_{i}$, this statement is proved.\\
 Conversely, suppose $\mathcal{F}^{k}$ has the asymptotic average shadowing property for some integer $k>0$. For each $\epsilon>0$ we can find a $\delta\in(0, \frac{\epsilon}{k})$ such that $d(s,t)<\delta$ implies
 \begin{equation}
\label{cb}d(f_{\lambda_{l}}o...of_{\lambda_{0}}(s),f_{\lambda_{l}}o...of_{\lambda_{0}}(t))<\frac{\epsilon}{k},\end{equation}
 for all $0\leq l\leq k-1$ and $\lambda_{l},...,\lambda_{0}\in\Lambda$.\\ Consider  $\{x_{i}\}_{i\geq 0}$ as an asymptotic average pseudo orbit of $ \mathcal{F}$.
 So there exist a sequence $\{\lambda_{0},\lambda_{1},...\}$ in $\Lambda^{\mathbb{Z}_{+}}$
 such that $$\lim_{n\rightarrow\infty}\frac{1}{kn}\sum_{i=0}^{n-1}d(f_{\lambda_{i}}(x_{i}),x_{i+1})=0.$$
  Lemma \ref{l1} implies that there is a set $I_{0}\subset \mathbb{Z}_{+}$ of zero density such that \\$\lim_{j\rightarrow \infty} d(f_{\lambda_{j}}(x_{j}),x_{j+1})=0$ provided $j \notin I_{0}$.
 \\ Let $I_{1}=\{j:\{jk,jk+1,...,jk+k-1\}\cap I_{0}\neq \emptyset\}$ and\\ $I=\bigcup_{j\in I_{1}}\{jk,jk+1,...,jk+k-1\}.$ Then $I$ has density zero and \\ $\lim_{j\rightarrow \infty} d(f_{\lambda_{j}}(x_{j}),x_{j+1})=0$ provided $j \notin I$. For the above $\delta>0$, there is an integer $N_{1}>0$ such that $d(f_{\lambda_{j}}(x_{j}),x_{j+1})<\delta$ for all $j>N_{1}$ and $j\notin I$. It follows that  $d(f_{\lambda_{jk+l}}(x_{jk+l}),x_{jk+l+1})<\delta$ for all $j>N_{1}$, $0\leq l< k$ and $j\notin I$.
  Therefor, by (\ref{cb})
   we have $d(f_{\lambda_{k-1}}o...of_{\lambda_{0}}(x_{jk}),x_{(j+1)k})<\epsilon$, for all $j>N_{1}$, $0\leq l< k$ and $j\notin I$. Then, $\lim_{j\rightarrow \infty}d(f_{\lambda_{k-1}}o...of_{\lambda_{0}}(x_{jk}),x_{(j+1)k})=0$ provided $j\notin I$. By Lemma \ref{l1}, $\{x_{ik}\}_{i\geq 0}$ is an asymptotic average pseudo orbit of $\mathcal{F}^{k}$. Since $\mathcal{F}^{k}$ has the asymptotic average shadowing property, there exists the sequence  $\gamma=\{\omega_{1},\omega_{2},...\}\in \Pi^{\mathbb{Z}_{+}}$ and point $u\in X$
  such that $\lim_{n\rightarrow\infty}\frac{1}{n}\sum_{i=0}^{n-1}d(\mathcal{F}^{k}_{\gamma_{i}}(u),x_{ik})=0$.
  By Lemma \ref{l1} again, there is a set $J_{0}\subset\mathbb{Z}_{+}$ of zero density such that  $\lim_{j\rightarrow\infty}d(\mathcal{F}^{k}_{\gamma_{j}}(u),x_{jk})=0$ provided $j\notin J_{0}$. Let $J_{1}=\{j:\{jk,jk+1,...,jk+k-1\}\cap J_{0}\neq \emptyset\}$
 and $J=\bigcup_{j\in J_{1}}\{jk,jk+1,...,jk+k-1\}.$ Then both $J_{1}$ and $J$ have density zero and  $\lim_{j\rightarrow\infty}d(\mathcal{F}^{k}_{\gamma_{j}}(u),x_{jk})=0$ provided $j\notin J_{1}$.\\
 For the above $\delta>0$, there is an integer $N_{2}>0$ such that $d(\mathcal{F}^{k}_{\gamma_{j}}(u),x_{jk})<\delta$ for all $j>N_{2}$ and $j\notin J_{1}.$ Then
 $$d(\mathcal{F}_{\sigma_{jk+l}}(u),x_{jk+l})<\epsilon,~~~~0\leq l<k$$
 for all $j>N_{2}$ and $j\notin J_{1}.$ Where $\sigma=\{\nu_{1},\nu_{2},...\}$ that $g_{\omega_{j}}=f_{\nu_{jk-1}}o ...of_{\nu_{jk}}$, for all $j>0$. Thus we have $\lim_{j\rightarrow \infty}d(\mathcal{F}_{\sigma_{j}}(u),x_{j})=0$ provided $j\notin J$. By using of Lemma \ref{l1} we can deduce that $\{x_{i}\}_{i>0}$ is asymptotically shadowed in average by the point $z$. This prove that $\mathcal{F}$ has the asymptotic average shadowing property.
  \end{proof}
\begin{example}
Consider the IFS  $\mathcal{F}$ in Example \ref{e2}, by Theorem \ref{tt} IFS $\mathcal{F}^{k}$ also have the asymptotic average shadowing property, for all $k>1$.\\
For example if $k=2$ then $\mathcal{F}^{2}=\{\Sigma_{2}; g_{0}, g_{1},g_{2},g_{3}\} $ where
$$g_{0}(s_{0}s_{1}s_{2}...)=f_{0}of_{0}(s_{0}s_{1}s_{2}...)=00s_{0}s_{1}s_{2}....,$$
$$g_{1}(s_{0}s_{1}s_{2}...)=f_{1}of_{0}(s_{0}s_{1}s_{2}...)=10s_{0}s_{1}s_{2}....,$$
$$g_{2}(s_{0}s_{1}s_{2}...)=f_{0}of_{1}(s_{0}s_{1}s_{2}...)=01s_{0}s_{1}s_{2}....,$$
$$g_{3}(s_{0}s_{1}s_{2}...)=f_{1}of_{1}(s_{0}s_{1}s_{2}...)=11s_{0}s_{1}s_{2}...$$
for each $s=s_{0}s_{1}s_{2}...\in\Sigma_{2}$.\\
Please note that $\mathcal{F}^{k}$ also is a uniformly contracting IFS.
\end{example}
Let $(X,d)$ and $(Y,d^{'}) $ are two complete metric spaces. Consider the product set $X\times Y$ endowed with the metric $D((x_{1},y_{1}),(x_{2},y_{2}))=max\{d(x_{1},x_{2}), d^{'}(y_{1},y_{2})\}$.\\
 Let $ \mathcal{F}=\{X; f_{\lambda}|\lambda\in\Lambda\}$ and $ \mathcal{G}=\{Y; g_{\gamma}|\gamma\in\Gamma\}$ are two parameterized IFS. The IFS $\mathcal{H}=\mathcal{F}\times\mathcal{G}=\{X\times Y; f_{\lambda}\times g_{\gamma}:\lambda\in\Lambda,\gamma\in\Gamma\}$, defined by
 $( f_{\lambda}\times g_{\gamma})(x,y)=( f_{\lambda}(x), g_{\gamma}(y))$ is called the \emph{product} of the IFS $\mathcal{F}$ and $\mathcal{G}$.
  \begin{theorem}\label{tp}
The product of two parameterized IFS has the asymptotic average shadowing property if and only if each projection has.
  \end{theorem}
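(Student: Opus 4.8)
The plan is to use the two elementary facts that the product metric $D=\max\{d,d'\}$ satisfies $D\big((a,b),(a',b')\big)\ge d(a,a')$, $D\big((a,b),(a',b')\big)\ge d'(b,b')$ and $D\big((a,b),(a',b')\big)\le d(a,a')+d'(b,b')$, together with the structural observation that a sequence in $(\Lambda\times\Gamma)^{\mathbb{Z}_+}$ is exactly a pair consisting of a sequence in $\Lambda^{\mathbb{Z}_+}$ and a sequence in $\Gamma^{\mathbb{Z}_+}$, and that (since $(f\times g)\circ(f'\times g')=(f\circ f')\times(g\circ g')$) an orbit of $\mathcal{H}$ along $\{(\mu_i,\eta_i)\}$ starting at $(z,w)$ is precisely the pair formed by the $\mathcal{F}$-orbit of $z$ along $\{\mu_i\}$ and the $\mathcal{G}$-orbit of $w$ along $\{\eta_i\}$. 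Throughout, the only operative part of the definition of an asymptotic average pseudo-orbit is the Cesàro limit.

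For the forward implication, assume $\mathcal{H}$ has the asymptotic average shadowing property; I would prove that $\mathcal{F}$ does (the case of $\mathcal{G}$ being symmetric). Given an asymptotic average pseudo-orbit $\{x_i\}_{i\ge0}$ of $\mathcal{F}$ with associated sequence $\{\lambda_i\}\in\Lambda^{\mathbb{Z}_+}$, fix an arbitrary $y_0\in Y$ and an arbitrary $\{\gamma_i\}\in\Gamma^{\mathbb{Z}_+}$ and set $y_{i+1}=g_{\gamma_i}(y_i)$, so that $\{y_i\}_{i\ge0}$ is a genuine orbit of $\mathcal{G}$ and $d'(g_{\gamma_i}(y_i),y_{i+1})=0$ for every $i$. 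Then $D\big((f_{\lambda_i}(x_i),g_{\gamma_i}(y_i)),(x_{i+1},y_{i+1})\big)=d(f_{\lambda_i}(x_i),x_{i+1})$, so $\{(x_i,y_i)\}_{i\ge0}$ is an asymptotic average pseudo-orbit of $\mathcal{H}$ along $\{(\lambda_i,\gamma_i)\}$. By hypothesis there exist $(z,w)\in X\times Y$ and a sequence $\{(\mu_i,\eta_i)\}$ such that $\frac1n\sum_{i=0}^{n-1}D\big(\text{(orbit of }\mathcal{H}\text{)}_i,(x_i,y_i)\big)\to0$; projecting to the first coordinate and using $D\ge d$ there, the $\mathcal{F}$-orbit of $z$ along $\{\mu_i\}$ asymptotically shadows $\{x_i\}$ in average.

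For the converse, assume $\mathcal{F}$ and $\mathcal{G}$ both have the property, and let $\{(x_i,y_i)\}_{i\ge0}$ be an asymptotic average pseudo-orbit of $\mathcal{H}$ along $\{(\lambda_i,\gamma_i)\}$. From $D\ge d$ and $D\ge d'$ we get $\frac1n\sum_{i=0}^{n-1}d(f_{\lambda_i}(x_i),x_{i+1})\to0$ and $\frac1n\sum_{i=0}^{n-1}d'(g_{\gamma_i}(y_i),y_{i+1})\to0$, so $\{x_i\}$ and $\{y_i\}$ are asymptotic average pseudo-orbits of $\mathcal{F}$ and $\mathcal{G}$ respectively. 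Choose $z\in X$ with driving sequence $\{\mu_i\}$ asymptotically shadowing $\{x_i\}$ in average, and $w\in Y$ with driving sequence $\{\eta_i\}$ asymptotically shadowing $\{y_i\}$ in average. Along the combined sequence $\{(\mu_i,\eta_i)\}$, the orbit of $(z,w)$ in $\mathcal{H}$ satisfies $D\big(\text{(orbit of }\mathcal{H}\text{)}_i,(x_i,y_i)\big)\le d\big(\text{(orbit of }\mathcal{F}\text{)}_i,x_i\big)+d'\big(\text{(orbit of }\mathcal{G}\text{)}_i,y_i\big)$; averaging over $i$ and letting $n\to\infty$ gives the desired conclusion.

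I do not expect a genuine obstacle: the argument is driven entirely by the comparability of $D$ with $d$ and $d'$. The only points that need care are bookkeeping, namely keeping the parameter sequences of $\mathcal{F}$ and $\mathcal{G}$ correctly bundled into / unbundled from a single sequence of $\mathcal{H}$ so that "orbit of $\mathcal{H}$'' always corresponds to the right pair of orbits. If one prefers, the same two implications can be run through Lemma \ref{l1}, replacing each Cesàro statement by a density-zero exceptional set, but the direct estimate using $\max\{a,b\}\le a+b$ is shorter and fully sufficient.
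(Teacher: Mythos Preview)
Your proof is correct and follows essentially the same route as the paper's: pair an asymptotic average pseudo-orbit of one factor with a genuine orbit of the other to pass from $\mathcal{H}$ to the factors, and shadow each coordinate separately and recombine to pass from the factors to $\mathcal{H}$. The only difference is that where the paper invokes an external lemma (Lemma~3.4 of \cite{[Nb]}) to deduce that the Ces\`aro average of $\max\{a_i,b_i\}$ vanishes from the vanishing of the averages of $a_i$ and $b_i$, you use the elementary inequality $\max\{a,b\}\le a+b$, which is a harmless simplification.
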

  \begin{proof}
  Suppose $\{(x_{i},y_{i})\}_{i\geq0}$ is a an asymptotic average pseudo orbit for $\mathcal{F}\times\mathcal{G}$. By definition of the asymptotic average pseudo orbit, $\{x_{i}\}_{i\geq0}$ and $\{y_{i}\}_{i\geq0}$ are asymptotic average pseudo orbits for $\mathcal{F}$ and $\mathcal{G}$ respectively. Then there exists the sequences $\sigma\in \Lambda^{\mathbb{Z}_{+}}$, $\gamma\in \Gamma^{\mathbb{Z}_{+}}$ and points $u\in X$, $v\in Y$
  such that $$\lim_{n\rightarrow\infty}\frac{1}{n}\sum_{i=0}^{n-1}d(\mathcal{F}_{\sigma_{i}}(u),x_{i})=0$$ and $$\lim_{n\rightarrow\infty}\frac{1}{n}\sum_{i=0}^{n-1}d^{'}(\mathcal{G}_{\gamma_{i}}(v),y_{i})=0.$$ Now, Lemma 3.4 of \cite{[Nb]} implies that  $$\lim_{n\rightarrow\infty}\frac{1}{n}\sum_{i=0}^{n-1}D(\mathcal{F}_{\sigma_{i}}\times\mathcal{G}_{\gamma_{i}}(u,v),(x_{i},y_{i}))\\= \lim_{n\rightarrow\infty}\frac{1}{n}\sum_{i=0}^{n-1}max(d(\mathcal{F}_{\sigma_{i}}(u),x_{i}),d^{'}(\mathcal{G}_{\gamma_{i}}(v),y_{i}))=0.$$
  So $\mathcal{F}\times\mathcal{G}$ has the asymptotic average shadowing property.\\
  Conversely; suppose that $\mathcal{F}\times\mathcal{G}$ has the asymptotic average shadowing property. Let $\{x_{i}\}_{i\geq0}$ be an asymptotic average pseudo orbit for $\mathcal{F}$. Take an orbit $\{y_{i}\}_{i\geq0}$ in $\mathcal{G}$. So $\{(x_{i},y_{i})\}_{i\geq0}$ is an asymptotic average pseudo orbit for $\mathcal{F}\times\mathcal{G}$. Then there exists the sequences $\sigma\in \Lambda^{\mathbb{Z}_{+}}$, $\gamma\in \Gamma^{\mathbb{Z}_{+}}$ and point $(u,v)\in X\times Y$ such that  $\lim_{n\rightarrow\infty}\frac{1}{n}\sum_{i=0}^{n-1} D(\mathcal{F}_{\sigma_{i}}\times\mathcal{G}_{\gamma_{i}}(u,v),(x_{i},y_{i}))=0$. \\Therefore
  $\lim_{n\rightarrow\infty}\frac{1}{n}\sum_{i=0}^{n-1}d(\mathcal{F}_{\sigma_{i}}(u), x_{i})=0$.\end{proof}

\section{The asymptotic average shadowing property and chain transitivity}
If $b<\infty$, then we say that a finite $\delta-$pseudo-orbit $\{x_{i}\}_{i=0}^{b}$ of $ \mathcal{F}$ is a $\delta-$chain from $x_{0}$ to $x_{b}$ with length $b+1$. \\A non-empty subset $L$ of $X$ is said to be chain transitive if for any $x, y\in L$ and any $\delta>0$ there is a $\delta-$chain of $ \mathcal{F}$ from $x$ to $y$.
We recall that, a point $x\in X$ is a \emph{chain recurrent} for $\mathcal{F}$ if for every $\epsilon>0$ there exists a finite sequences of points $\{p_{i}\in X :i=0,1,...,n\}$ with $p_{0}=p_{n}=x$, and a corresponding sequence of indices $\{\lambda_{i}\in\Lambda:i=1,2,...,n\}$ satisfying $d(f_{\lambda_{i}}(p_{i}),p_{i+1})\leq \epsilon$ for $i=1,2,...,n-1$. Such a sequence of points is called an $\epsilon-$chain from $x$ to $x$, similarly we can define an $\epsilon-$chain from $x$ to $y$ \cite{[BWL]}.
\\
An IFS $ \mathcal{F}$ is said to be chain transitive if $X$ is a chain transitive set.\\
  The following theorem is one of the main results of the paper.
\begin{theorem}\label{tr}
Let $X$ be a compact metric space and $ \mathcal{F}=\{X; f_{\lambda}|\lambda\in\Lambda\}$ be an IFS such that each $f_{\lambda}$ is surjective. If $ \mathcal{F}$ has the asymptotic average shadowing property then $ \mathcal{F}$ is chain transitive.
\end{theorem}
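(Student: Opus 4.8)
The plan is to fix arbitrary points $a,b\in X$ and a number $\delta>0$ and to exhibit a $\delta$-chain of $\mathcal{F}$ from $a$ to $b$; since $a,b,\delta$ are arbitrary this is exactly chain transitivity. First, using uniform continuity of the finitely many maps $f_\lambda$ on the compact space $X$, I fix $\delta'\in(0,\delta]$ with the property that $d(u,v)<\delta'$ implies $d(f_\lambda(u),f_\lambda(v))<\delta$ for every $\lambda\in\Lambda$ (here $\operatorname{diam}X<\infty$ by compactness). Next I fix one symbol $\lambda^\ast\in\Lambda$ and, \emph{using that $f_{\lambda^\ast}$ is surjective}, for every $k\geq1$ I choose a finite genuine orbit segment $C_k=(b_{-(k-1)},\dots,b_{-1},b_0)$ with $b_0=b$ and $f_{\lambda^\ast}(b_{-i-1})=b_{-i}$; I also let $B_k=(a_0,\dots,a_{k-1})$ be a forward orbit segment with $a_0=a$. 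This is the only place the surjectivity hypothesis is used: it is what lets a chain terminate \emph{exactly} at $b$.

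Then I form the sequence $\{w_i\}_{i\ge0}$ by concatenating the blocks $B_1,C_1,B_2,C_2,B_3,C_3,\dots$, together with the concatenated symbol strings (at the finitely-spaced junctions between consecutive blocks any symbol may be used). Within each block the one-step error $d(f_{\sigma_i}(w_i),w_{i+1})$ vanishes, and it is at most $\operatorname{diam}X$ at the junctions; since the block lengths increase, among the first $n$ terms there are only $O(\sqrt n)$ junctions, so $\tfrac1n\sum_{i=0}^{n-1}d(f_{\sigma_i}(w_i),w_{i+1})\to0$. Hence $\{w_i\}$ is an asymptotic average-pseudo-orbit of $\mathcal{F}$, and the asymptotic average shadowing property gives $z\in X$ and $\tau\in\Lambda^{\mathbb{Z}_+}$ with $\tfrac1n\sum_{i=0}^{n-1}d(\mathcal{F}_{\tau_i}(z),w_i)\to0$. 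I note that $\{\mathcal{F}_{\tau_i}(z)\}_{i\ge0}$ is itself a genuine orbit of $\mathcal{F}$ (each step applies $f_{\tau_i}$), hence a $0$-chain. Applying Lemma \ref{l1} to the bounded sequence $a_i=d(\mathcal{F}_{\tau_i}(z),w_i)$, there is a set $J\subset\mathbb{Z}_+$ of density zero with $d(\mathcal{F}_{\tau_j}(z),w_j)<\delta'$ for all large $j\notin J$.

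The heart of the argument is then a counting-and-splicing step. The set of indices lying in some block $B_k$, and the set of indices lying in some block $C_k$ but not at its last position, each have positive lower density (each approaches $\tfrac12$ along the natural subsequence of block endpoints); since $J$ has density zero, each of these two sets contains arbitrarily large elements outside $J$. I choose such indices $j'<j$ with $j\ge j'+2$, where $j'$ sits in some $B_k$ (so $w_{j'}=a_m$ for some $0\le m\le k-1$ with $a_0=a$) and $j$ sits in some $C_{k'}$ with $w_j=b_{-i}$, $1\le i\le k'-1$; for these we have $d(\mathcal{F}_{\tau_{j'}}(z),w_{j'})<\delta'$ and $d(\mathcal{F}_{\tau_j}(z),w_j)<\delta'$. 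Now splice a $\delta$-chain from $a$ to $b$: run the genuine orbit $a=a_0,a_1,\dots,a_m=w_{j'}$; jump to $\mathcal{F}_{\tau_{j'+1}}(z)=f_{\tau_{j'}}(\mathcal{F}_{\tau_{j'}}(z))$, the jump error being $d(f_{\tau_{j'}}(w_{j'}),f_{\tau_{j'}}(\mathcal{F}_{\tau_{j'}}(z)))<\delta$ by the choice of $\delta'$; ride the genuine orbit $\mathcal{F}_{\tau_{j'+1}}(z),\dots,\mathcal{F}_{\tau_j}(z)$; jump to $b_{-i+1}=f_{\lambda^\ast}(b_{-i})$, with error $d(f_{\lambda^\ast}(\mathcal{F}_{\tau_j}(z)),f_{\lambda^\ast}(b_{-i}))<\delta$; and finish along the genuine orbit $b_{-i+1},\dots,b_0=b$. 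Every step is either an exact orbit step or one of the two jumps of size $<\delta$, so this is a $\delta$-chain from $a$ to $b$, completing the proof.

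The step I expect to be the main obstacle is this last counting argument. One is tempted to demand that the orbit of $z$ be $\delta'$-close to $w_i$ at the single index where $w_i=a$ (or where $w_i=b$), but those indices form a density-zero set that could be entirely contained in $J$; localizing $J$ to a single (necessarily sparse) block also fails, since the density-zero bound is not strong enough on such short windows. The resolution is to observe that the \emph{interiors} of the $a$-blocks and the $b$-blocks together occupy a set of positive density, so their complement with $J$ is still infinite, and to exploit the freedom to board and leave the shadowing orbit $\{\mathcal{F}_{\tau_i}(z)\}$ at these many good indices. The remaining tasks — verifying that the concatenated sequence really is an asymptotic average-pseudo-orbit via the $O(\sqrt n)$ junction bound, and keeping the symbol sequences consistent across all the splices — are routine.
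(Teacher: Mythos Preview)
Your proof is correct and follows essentially the same route as the paper's: both build an asymptotic average-pseudo-orbit by concatenating growing forward orbit segments started at $a$ with backward (preimage) segments terminating at $b$, invoke the asymptotic average shadowing property, and then splice a $\delta$-chain at indices where the shadowing orbit is $\delta'$-close. The only cosmetic differences are that the paper uses block lengths $2^k$ (so $O(\log n)$ junctions rather than your $O(\sqrt n)$) and proves the positive-density ``good index'' claim by a direct $\liminf\geq \eta/2$ contradiction instead of going through Lemma~\ref{l1}.
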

\begin{proof}
Suppose $\delta$ is a positive number and $x,y\in X$ are two distinct points. It is sufficient to find a $\delta-$chain from $x$ to $y$. \\
Let $g$ be an arbitrary function of $ \mathcal{F}$.  Define a sequence $\{t_{i}\}_{i\geq 0}$ in $X$ as follows. \\$t_{0}=x,~~~y=t_{1}$\\
$t_{2}=x,~~~y=t_{3}$\\
$t_{4}=x,~~~,g(x),~y_{-1},~y=t_{7}$\\
$~~~~~~~~~~~\vdots$\\
$t_{2^{k}}=x,~~g(x),...,~~~g^{2^{k-1}-1}(x),~y_{-2^{k-1}+1},~...,~y_{-1},~~~y=t_{2^{k+1}-1}$\\
$~~~~~~~~~~~\vdots$\\
where $g(y_{-j})=y_{-j+1}$ for every $j>0$ and $y_{0}=y.$ This is clear that for $2^{k}\leq n
<2^{k+1}$,$\frac{1}{n}\sum_{i=0}^{n-1}d(g(t_{i}),t_{i+1})<\frac{2(k+1)\times D}{2^{k}},$where\\ $D=\max\{d(x,y):x,y\in X\}$. So $\lim_{n\rightarrow\infty}\frac{1}{n}\sum_{i=0}^{n-1}d(g(t_{i}),t_{i+1})=0$. Then $\{t_{i}\}_{i\geq 0}$ is an asymptotic average pseudo orbit of $ \mathcal{F}$. Since $ \mathcal{F}$ has the asymptotic average shadowing property, there is a point $w$ in $X$ and a sequence $\sigma\in \Lambda^{\mathbb{Z}_{+}}$, such that \begin{equation}
\label{ffa}
\lim_{n\rightarrow\infty}\frac{1}{n}\sum_{i=0}^{n-1}d^{'}(\mathcal{F}_{\sigma_{i}}(w),t_{i})=0.\end{equation}
Consider  the above positive number $\delta$, there is an $\eta\in(0,\delta)$ such that\\ $d(u,v)<\eta$ implies $d(f_{\lambda}(u),f_{\lambda}(v))<\delta $, for all $u,v\in X$ and $\lambda\in\Lambda$.\\
Then we have the following result.\\
\textbf{Claim.}$(1)$ \emph{There exist infinitely many positive integers $j$ such that \\$t_{n_{j}}\in\{x,g(x),...,g^{2^{j}-1}(x)\}$
and $d(\mathcal{F}_{\sigma_{n_{j}}}(w),t_{n_{j}})<\eta$;\\
$(2)$ There exist infinitely many positive integers $l$ such that \\
$t_{n_{l}}\in\{y_{-2^{l}+1},...,y_{-1},y\}$
and $d(\mathcal{F}_{\sigma_{n_{l}}}(w),t_{i})<\eta$.\\}
So, we can choose two positive integers $j_{0}$ and $l_{0}$ such that $n_{j_{0}}<n_{l_{0}} $ and \\$t_{n_{j_{0}}}\in\{x,g(x),...,g^{2^{j_{0}}-1}(x)\}$ and $d(\mathcal{F}_{\sigma_{n_{j_{0}}}}(w),t_{n_{j_{0}}})<\eta$;\\$t_{n_{l_{0}}}\in\{y_{-2^{l}+1},...,y_{-1},y\}$
and $d(\mathcal{F}_{\sigma_{n_{l_{0}}}}(w),t_{n_{l_{0}}})<\eta$.\\
Let
\\
$t_{n_{j_{0}}}=g^{j_{1}}(x)$ for some $j_{1}>0$;\\
$t_{n_{l_{0}}}=y_{-l_{1}}$ for some $l_{1}>0$.\\Since $d(\mathcal{F}_{\sigma_{n_{j_{0}}}}(w),t_{n_{j_{0}}})<\eta$ then $d(\mathcal{F}_{\sigma_{n_{j_{0}}+1}}(w),f_{\lambda_{n_{j_{0}}+1}}(t_{n_{j_{0}}}))<\delta$. Similarly, since  $d(\mathcal{F}_{\sigma_{n_{l_{0}}}}(w),t_{n_{l_{0}}})<\eta<\delta$ then we have a $\delta-$chain from $x$ to $y$ as follows:\\
$x,g(x),...,g^{j_{1}}(x)=t_{n_{j_{0}}},$\\ $\mathcal{F}_{\sigma_{n_{j_{0}}+1}}(w),\mathcal{F}_{\sigma_{n_{j_{0}}+2}}(w),...,\mathcal{F}_{\sigma_{n_{l_{0}}-1}}(w),$\\$
t_{n_{l_{0}}}=y_{-l_{1}},y_{-l_{1}+1},...,y.$\\
Therefore $\mathcal{F}$ is chain transitive.
\end{proof}
\textbf{Proof of Claim:}With no loss of generality we prove only the conclusion $(1)$.\\
By contrary, suppose that there is a positive integer $N$ such that for all integers $k>N$, we have \\$t_{i}\in\{x,g(x),...,g^{2^{j}-1}(x)\}$,\\
then $d(\mathcal{F}_{\sigma_{i}}(w),t_{i}))\geq\eta$.\\
This implies that\\$\lim \inf_{n\rightarrow\infty}\frac{1}{n}\sum_{i=0}^{n-1}d^{'}(\mathcal{F}_{\sigma_{i}}(w),t_{i})\geq\frac{\eta}{2}.$ That is contradicts with (\ref{ffa}).
\\The next
corollary quickly follows from Theorem \ref{tr}.
\begin{corollary}
Let $X$ be a compact metric space and $ \mathcal{F}=\{X; f_{\lambda}|\lambda\in\Lambda\}$ be an IFS such that each $f_{\lambda}$ is surjective. If $ \mathcal{F}$ has the asymptotic average shadowing property then every point $x\in X$ is a chain recurrent point, that is, $CR(\mathcal{F})=X.$
\end{corollary}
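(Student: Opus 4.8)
The plan is to deduce this immediately from Theorem \ref{tr}, since the hypotheses are identical. Because each $f_{\lambda}$ is surjective and $\mathcal{F}$ has the asymptotic average shadowing property, Theorem \ref{tr} yields that $\mathcal{F}$ is chain transitive, i.e. $X$ is a chain transitive set of $\mathcal{F}$. All that remains is to convert chain transitivity of $X$ into chain recurrence of each individual point.

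Fix $x\in X$ and $\delta>0$. If $X=\{x\}$, then surjectivity of each $f_{\lambda}$ forces $f_{\lambda}(x)=x$, so the constant sequence $x,x,x,\dots$ is a periodic $\delta$-pseudo-orbit through $x$ and there is nothing to prove. Otherwise choose $z\in X$ with $z\neq x$. By chain transitivity there is a $\delta$-chain $\{p_{i}\}_{i=0}^{a}$ of $\mathcal{F}$ from $x$ to $z$ (so $p_{0}=x$, $p_{a}=z$), with an associated finite index string in $\Lambda$, and likewise a $\delta$-chain $\{q_{i}\}_{i=0}^{b}$ from $z$ to $x$ (so $q_{0}=z$, $q_{b}=x$), with its own finite index string.

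Concatenating these two chains along the common point $z=p_{a}=q_{0}$ produces a finite sequence $x=p_{0},p_{1},\dots,p_{a}=q_{0},q_{1},\dots,q_{b}=x$ together with the concatenation of the two index strings; by construction every consecutive pair satisfies the required $\delta$-inequality, so this is a periodic $\delta$-pseudo-orbit of $\mathcal{F}$ through $x$. Since $\delta>0$ was arbitrary, $x$ is a chain recurrent point of $\mathcal{F}$, and since $x\in X$ was arbitrary, $CR(\mathcal{F})=X$. I expect no genuine obstacle here, as all the substantive content is already carried by Theorem \ref{tr}; the only points meriting a word of care are the degenerate one-point space and the bookkeeping that the two finite index strings concatenate to witness the IFS $\delta$-pseudo-orbit condition.
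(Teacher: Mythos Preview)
Your proposal is correct and matches the paper's approach: the paper does not give an explicit argument but simply remarks that the corollary ``quickly follows from Theorem~\ref{tr}.'' Your write-up supplies precisely the routine detail the paper suppresses, namely that chain transitivity of $X$ (established in Theorem~\ref{tr}) yields a $\delta$-chain from $x$ back to $x$ for every $x$ and every $\delta>0$, handling the one-point case and the concatenation bookkeeping carefully.
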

\begin{remark}
Let $ \mathcal{F}=\{X; f_{\lambda}|\lambda\in\Lambda\}$ and $\lambda_{0}\in\Lambda$. Every chain recurrent of $ f_{\lambda_{0}}$ is a chain recurrent of $\mathcal{F}$ but by Example \ref{e8} conversely is not true.
\end{remark}
Consider a circle $S^{1}$ with coordinate $x \in [0; 1)$ and we denote by $d$
the metric on $S^{1}$ induced by the usual distance on the real line. Let $\pi(x) :\mathbb{ R} \rightarrow S^{1}$ be the covering projection defined by the relations
$$\pi(x) \in [0; 1) and~ \pi(x) = x(x mod 1)$$
with respect to the considered coordinates on $S^{1}$.
\begin{example}\label{e8}
Let $\pi:[0,1]\rightarrow S^{1}$ be a map defined by $\pi(t)=(\cos(2\pi t),\sin(2\pi t))$
 Let $F_{1}:[0,1]\rightarrow[0,1]$ be a homeomorphism defined by \\
 $F_{1}(t) =\left\lbrace
 \begin{array}{c l}
  t+(\frac{1}{2}-t)t& \text{if $0\leq t\leq \frac{1}{2}$}\\
  t-(t-\frac{1}{2})(1-t)& \text{if $\frac{1}{2}\leq t\leq 1$} \end{array}
\right. $ \\
And
 $F_{2}:[0,1]\rightarrow[0,1]$ be a homeomorphism defined by \\
 $F_{2}(t) =\left\lbrace
 \begin{array}{c l}
 t+(\frac{1}{2}-t)t& \text{if $0\leq t\leq \frac{1}{2}$}\\
  t+(1-t)(t-\frac{1}{2})& \text{if $\frac{1}{2}\leq t\leq 1$} \end{array}
\right. $\\
Assume that $f_{i}$ is homeomorphisms on $S^{1}$ defined by\\ $f_{i}(\cos(2\pi t),\sin(2\pi t))=(\cos(2\pi F_{i}(t)),\sin(2\pi F_{i}(t)))$, for $i\in\{0,1\}$. \\Consider  $\mathcal{F}=\{S^{1}, f_{\lambda}|\lambda\in\{0,1\}\}$, this is clear that  $x=\pi(\frac{1}{2})$ is not a chain recurrent point for $f_{1}$.
 We show that  $x$  is a chain recurrent for $\mathcal{F}.$  Given $\epsilon>0$, there exist $\delta>0$ such that $|s-t|<\delta$ implies $d(\pi(s),\pi(t))<\epsilon.$ By definition of $F_{1}$ this is clear that if $0<t<a$ then $\{F_{1}^{n}(t)\}_{n\geq 0}$ is an increasing sequence that converges to $\frac{1}{2}$. Similarly if $\frac{1}{2}<t<1$ then $\{F_{2}^{n}(t)\}_{n\geq 0}$ is an increasing sequence that converges to $1$.  There is a $\delta-$chian, respect to $F_{2}$, $\frac{1}{2}=y_{0},...y_{N}=1$ from $\frac{1}{2}$ to $1$ and a $\delta-$chain, respect to $F_{1}$, $0=y_{N+1},...,y_{N+K}=\frac{1}{2}$ from $0$ to $\frac{1}{2}$. Hence $x_{0},...,x_{N+K}$ is an $\epsilon-$chain from $x$ to $x$. Up to this point, $x$ is a chain recurrent for  $\mathcal{F}.$
\end{example}
\begin{figure}[htb!]
\label{asd}
\begin{center}
\begin{tabular}{c}
\includegraphics[width=7 cm , height= 6cm]{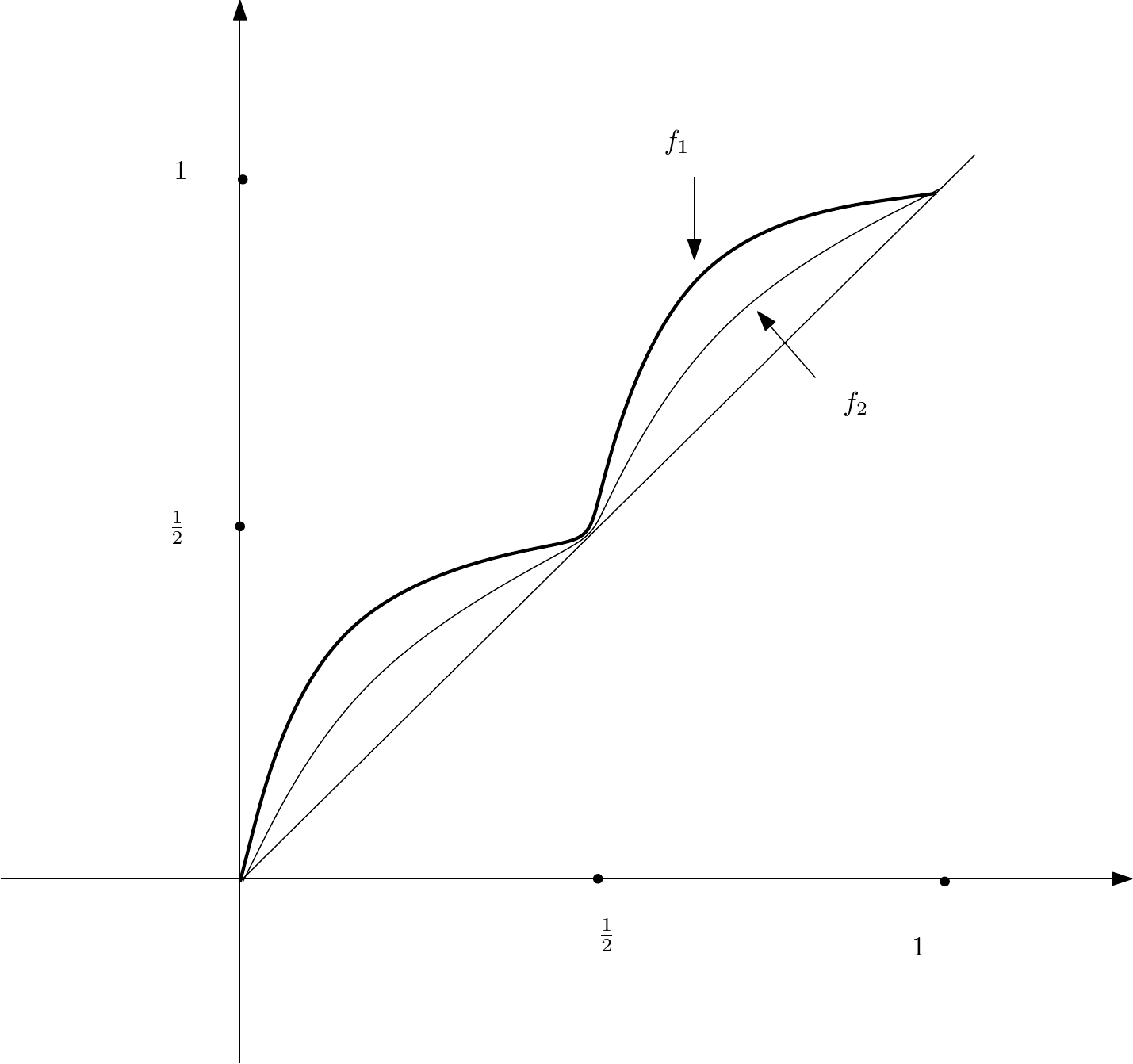}\\
\end{tabular}
\end{center}
\caption{}
\end{figure}
This is known that, in original discrete dynamical systems, that neither asymptotic average nor shadowing property implies the other \cite{[HZ],[RA]}.
The following example shows that in general the asymptotic average shadowing property need not imply the shadowing property for IFS.
\begin{example}\label{e9}
Let $f_{1}, f_{2}: [0,1]\rightarrow [0,1]$ are two continuous map such that $f_{1}(x)>f_{2}(x)>x$ if and only if $x\notin \{0,\frac{1}{2},1\}$ (see Figure $1$ for more details).

Consider the IFS, $\mathcal{F}=\{[0,1]; f_{1},f_{2}\}$. \\
This is clear that for any $\delta>0$ there exist a $\delta$-pseudo orbit from $0$ to $1$, but for every $x\in [0,1]$ and $\sigma=\{\lambda_{0},\lambda_{1},...\}\in\{0,1\}^{\mathbb{Z}_{+}}$ we have $\{\mathcal{F}_{\sigma_{i}}(x)\}_{i\geq0}\subset [0,\frac{1}{2}]$ or $\{\mathcal{F}_{\sigma_{i}}(x)\}_{i\geq0}\subset [\frac{1}{2},1]$. This imply that $\mathcal{F}$ does not have the shadowing property.
\\
Now, we show that $\mathcal{F}$ has the asymptotic average shadowing property. Suppose $\{x_{i}\}_{i\geq 0}$ is an asymptotic average pseudo orbit of $\mathcal{F}$, so there is a sequence $\sigma=\{\lambda_{0},\lambda_{1},...\}\in\{0,1\}^{\mathbb{Z}_{+}}$ such that
\begin{equation}
\label{fa}\lim_{n\rightarrow\infty}\frac{1}{n}\sum_{i=0}^{n-1}|f_{\lambda_{i}}(x_{i})-x_{i+1}|=0.\end{equation} If $\lim_{i\rightarrow\infty}x_{i}=\frac{1}{2}$,  then obviously the fixed point $\frac{1}{2}$ asymptotically shadows $\{x_{i}\}_{i\geq 0}$ in average. \\Otherwise, there exist $\nu\in (0,\frac{1}{10})$ and $N_{0}>0$ such that $x_{i}\in[0,\frac{1}{2}-\nu]\cup[\frac{1}{2}+\nu, 1]$ for all $i>N_{0}$. With no loss of generality, we assume that $x_{i}\in [\frac{1}{2}+\nu, 1]$ for all $i\geq 0$. For $\epsilon\in(0,\nu)$, put\\
$\mu_{\epsilon}=\min\{f_{m}(x)-x:\frac{1}{2}+\epsilon\leq x\leq 1-\frac{\epsilon}{2},m=1 or 2\}.$\\
Then $\mu_{\epsilon}>0$ and $f_{m}(x)\geq x+\mu_{\epsilon}$ for all $x_{i}\in [\frac{1}{2}+\nu, 1-\frac{\epsilon}{2}]$ and $m\in\{0,1\}.$ Thus\\
$f_{\lambda_{1}}o f_{\lambda_{0}}(x)\geq f_{\lambda_{0}}(x)+\mu_{\epsilon}\geq x+2\mu_{\epsilon}$ where $f_{\lambda_{0}}(x)\in [\frac{1}{2}+\nu, 1-\frac{\epsilon}{2}]$.\\ Let $N\geq \frac{(1-3\epsilon)}{2\mu_{\epsilon}}$; we have
\begin{equation}
\label{fb}
\mathcal{F}_{\sigma_{n}}(x)\in(1-\frac{\epsilon}{2},1]~ \text{for all} ~ n\geq N ~\text{and all} ~x\in [\frac{1}{2}+\epsilon,1].
\end{equation}
By (\ref{fa}) and Lemma \ref{l1}, suppose that  $I_{0}\subseteq \mathbb{Z}_{+}$ is a set of zero density such that
$\lim_{j\rightarrow \infty}|f_{\lambda_{i}}(x_{i})-x_{i+1}|=0$ provided $j\notin I_{0}$. Let \\$I_{N}=\{j:\{jN, jN+1,...,jN+N-1\}\cap I_{0}\neq \emptyset\}$ and $I_{N}^{'}=\bigcup_{j\in I_{N}}\{jN, jN+1,...,jN+N-1\}$.\\ Then $I_{N}^{'}$ has density zero and $\lim_{j\rightarrow \infty}|f_{\lambda_{i}}(x_{i})-x_{i+1}|=0$ provided $j\notin I_{N}^{'}$.\\
Since $f_{1}$ and $f_{2}$ are continuous maps, there is a $\delta>0$ such that $|t-s|<\delta$ implies
\begin{equation}
\label{fc}
\mid f_{\lambda_{k}}o...of_{\lambda_{0}}(s)-f_{\lambda_{k}}o...of_{\lambda_{0}}(t)\mid<\frac{\epsilon}{2N},
\end{equation}
 for all $0\leq k\leq N-1$ and $\lambda_{0},..\lambda_{k}\in\{1,2\}.$\\
Take $\kappa\in(0,\min\{\delta,\mu_{\epsilon},\frac{\epsilon}{2}\})$; there exist an integer $N_{1}>N$ such that \\
$|f_{\lambda_{i}}(x_{i})-x_{i+1}|<\kappa~~~$ for all $j>N_{1}$ and $j\notin I_{N}^{'}$.\\
So, by (\ref{fc}) we have
\begin{equation}
\label{fd}
\mid f_{\lambda_{N}}o...of_{\lambda_{0}}(x_{jN})-x_{(j+1)N}\mid<\frac{\epsilon}{2},
\end{equation}
for all $j>N_{1}$ and $j\notin I_{N}^{'}$.\\
Let $I_{N}^{*}=\bigcup_{j\in I_{N}}\{(j-1)N,(j-1)N+1,...,jN,jN+1,...,jN+N-1\}.$ Then $I_{N}^{*}$ has density zero. It follows from (\ref{fb}) and (\ref{fc}) that \\$x_{(j+1)N+k}\in (1-\epsilon,1],~~~~~0\leq k<N-1$ for all $j>N_{1}$ and $j\notin I_{N}^{*}$. Then
\\
$\lim_{n\rightarrow\infty}\frac{1}{n}\sum_{i=0}^{n-1}|x_{i}-1|\leq \lim_{n\rightarrow\infty}\frac{1}{n}\{Card(I_{N}^{*}(n)).1+[n-card(I_{N}^{*}(n))].\epsilon\}=\epsilon.$\\
Where $I_{N}^{*}(n)=I_{N}^{*}\cap\{0,1,...,n-1\}$. Since $\epsilon>0$ is arbitrary, we have $\lim_{n\rightarrow\infty}\frac{1}{n}\sum_{i=0}^{n-1}|x_{i}-1|=0$. This prove that the fixed point $1$ asymptotically shadows $\{x_{i}\}_{i\geq 0}$ in average.
\end{example}


\end{document}